\documentclass[letterpaper, 10 pt, conference]{ieeeconf}  

\IEEEoverridecommandlockouts                              
                                                          
\usepackage{amsmath}
\usepackage{amssymb}
\usepackage{mathtools}

\newtheorem{thm}{Theorem}

\newtheorem{prop}{Proposition}
\newtheorem{defn}{Definition}
\newtheorem{rem}{Remark}
\newtheorem{exmp}{Example}

\title{\LARGE \bf On the Exact Linearization of Multi-Input Discrete-Time Flat Systems}

\author{Johannes Schrotshamer, Bernd Kolar, Markus Sch{\"o}berl
\thanks{This research was funded in whole, or in part, by the Austrian Science Fund (FWF) P36473. For the purpose of open access, the author has applied a CC BY public copyright license to any Author Accepted Manuscript version arising from this submission. All authors are with the Institute of Control Systems, Johannes Kepler University Linz, Altenbergerstrasse 69, 4040 Linz, Austria.\newline {Email: \tt\small johannes.schrotshamer@jku.at, \newline bernd.kolar@jku.at, markus.schoeberl@jku.at}}}

\begin{document}

\maketitle
\thispagestyle{empty}
\pagestyle{empty}

\begin{abstract}                
In this contribution, we address the exact linearization of forward- and backward-flat discrete-time nonlinear systems with an arbitrary number of inputs. Building on structural properties of the flat parameterization, we show that forward-flat systems can be rendered static feedback linearizable by prolongations of suitably transformed inputs. Analogously, for backward-flat systems, we derive a constructive procedure showing that an exact linearization can be achieved by prelongations of suitably chosen functions of the system variables. In this way, we extend previously established results for two-input systems to the general \(m\)-input case. Finally, we illustrate the proposed linearization procedures by an academic example and a discrete-time model of a quadrotor.
\end{abstract}

\begin{keywords}
Difference flatness, exact linearization, discrete-time systems, nonlinear control systems
\end{keywords}


\section{Introduction}
    The concept of differential flatness was introduced in the 1990s by Fliess, L{\'e}vine, Martin and Rouchon for nonlinear continuous-time systems (see e.g. \cite{FliessLevineMartinRouchon:1995,FliessLevineMartinRouchon:1999}) and has since become an important tool in nonlinear control theory. A continuous-time system is called flat if all system variables can be parameterized by a so-called flat output and a finite number of its time derivatives, while the flat output itself is a function of the system variables and a finite number of their time derivatives. Consequently, the solutions of a flat system are in one-to-one correspondence with those of a trivial system, which considerably simplifies both feedforward and feedback control design.
    For this reason, flatness-based methods for trajectory planning and tracking control have been successfully applied to a wide range of practical problems.
    However, no verifiable necessary and sufficient conditions for the flatness of general nonlinear multi-input systems are known to date; existing results are restricted to certain subclasses (see, e.g., \cite{SchoeberlRiegerSchlacher:2010,Schoeberl:2014,NicolauRespondek:2017,GstottnerKolarSchoberl:2021b,GstoettnerKolarSchoeberl:2022b,Levine:2025,NicolauRespondekLi:2025}).

    For the discrete-time analogue of differential flatness, known as difference flatness, several definitions have been proposed in the literature over the years. To the best of our knowledge, the most general definition of discrete-time flatness can be found in \cite{DiwoldKolarSchoberl:2020}, which is based on the one-to-one correspondence between the solutions of a flat system and those of a trivial system. Within this approach, both the flat output and the flat parameterization may depend on forward- as well as backward-shifts (see also \cite{GuillotMillerioux2020} or \cite{Kaldmae:2021}).
    For this general class of discrete-time systems, some promising initial results concerning verifiable necessary and sufficient conditions were obtained in \cite{Kaldmae:2021} using an algebraic approach. However, due to the associated computational complexity, arising from the presence of degrees of freedom and the need to solve partial differential equations, the development of alternative methods remains an active field of research.

    As discussed in \cite{SchrotshamerKolarSchoeberl:2025}, this general notion of difference flatness contains forward-flat and backward-flat systems as important subclasses. Forward-flat systems, which have been studied, e.g., in \cite{Sira-RamirezAgrawal:2004,KaldmaeKotta:2013,KolarKaldmaeSchoberlKottaSchlacher:2016}, are characterized by the fact that both the flat output and the flat parameterization of the system variables depend exclusively on forward-shifts. For this subclass, two distinct geometric tests providing verifiable necessary and sufficient conditions have been developed in \cite{KolarDiwoldSchoberl:2023,KolarSchrotshamerSchoberl:2025}.
    Backward-flat systems, on the other hand, arise when the flat output and the flat parameterization are restricted to backward-shifts. For this subclass, an algorithmic test was recently introduced in \cite{SchrotshamerKolarSchoeberl:2026}.

    It has been shown in \cite{DiwoldKolarSchoberl:2020} that, analogously to the continuous-time case, every flat discrete-time system can be exactly linearized by a suitable dynamic feedback. Additionally, in \cite{SchrotshamerKolarSchoeberl:2025} it was proven that the properties of forward- and backward-flatness are related to rank conditions regarding certain submatrices of the Jacobian of the flat parameterization. In particular, for forward-flat systems, the submatrix of the Jacobian corresponding to the input parameterization with respect to the highest occurring forward-shifts is not of full rank. Conversely, in the backward-flat case, the analogous submatrix associated with the state parameterization and the highest occurring backward-shifts fails to have full rank.
    However, their particular location within the Jacobian suggests that these rank defects can be resolved by particularly simple types of dynamic feedback. As established in \cite{SchrotshamerKolarSchoeberl:2025} for two-input systems, these consist of shift chains, which are attached either in the form of forward-shifts to suitably chosen transformed inputs or as backward-shifts to suitably chosen functions of the system variables. These two types of dynamic feedback are referred to as prolongations and prelongations, respectively.

    In this contribution, we extend the results of \cite{SchrotshamerKolarSchoeberl:2025} from the two-input case to forward-flat and backward-flat systems with $m$ inputs. In particular, we show that forward-flat systems can be rendered static feedback linearizable by prolongations of suitably transformed inputs, whereas backward-flat systems can be rendered static feedback linearizable by prelongations of suitably chosen functions of the system variables.

    The paper is organized as follows: In Section \ref{sec:two}, we first recall the concept of flatness for discrete-time systems as discussed in \cite{DiwoldKolarSchoberl:2020} including the subclasses of forward- and backward-flat systems. Next, in Section \ref{sec:three}, for the special cases of forward- and backward-flat systems with $m$ inputs, we derive simple types of dynamic extension for the purpose of an exact linearization before we illustrate our results by an academic example and the practical example of an unmanned aerial vehicle in the form of a quadrotor in Section \ref{sec:four}.

    \paragraph*{Notation}
    To enhance readability, we commonly use the superscript $+$ to indicate a one-step forward-shift of a variable. Higher-order forward-shifts and backward-shifts are, in contrast, written using bracketed subscripts. Thus, for $\alpha\geq0$, $u_{[\alpha]}$ denotes the $\alpha$th forward-shift of the variable $u$, whereas $u_{[-\alpha]}$ denotes its $\alpha$th backward-shift. The corresponding forward- and backward-shift operator denoted by $\delta^\alpha$ and $\delta^{-\alpha}$ can act on variables as well as functions, see e.g. \cite{DiwoldKolarSchoberl:2020} for details.
    When a variable consists of several components, i.e., $y=(y^1,\ldots,y^m)$, we write $y_{[\alpha]}=(y^1_{[\alpha]},\ldots,y^m_{[\alpha]})$.
    Shifts of different orders are represented compactly using multi-indices. Let $S_1=(s_{1,1},\ldots,s_{1,m})$ and $S_2=(s_{2,1},\ldots,s_{2,m})$ be two multi-indices. Then, $y_{[-S_1]}$ and $y_{[S_2]}$ denote the corresponding backward- and forward-shifts of the individual components of $y$, respectively. Furthermore, $y_{[-S_1,S_2]}$ is an abbreviation for $y$ together with its backward-shifts up to the orders $S_1$ and its forward-shifts up to the orders $S_2$.
    
    If $s_{1,j}\leq s_{2,j}$ holds for all $j=1,\ldots,m$, we write $S_1\leq S_2$. In addition, $y^j_{[s_{1,j},s_{2,j}]}$ is defined to be empty whenever $s_{1,j}>s_{2,j}$. Arithmetic operations on multi-indices are understood componentwise. In particular,
    \[
        S_1\pm S_2
        =
        (s_{1,1}\pm s_{2,1},\ldots,s_{1,m}\pm s_{2,m})\,.
    \]
    Similarly, addition and subtraction of a scalar $a$ are defined by
    \[
        S_1\pm a
        =
        (s_{1,1}\pm a,\ldots,s_{1,m}\pm a).
    \]
    Finally,
    \[
        \#S_1=\sum_{j=1}^{m}s_{1,j}
    \]
    denotes the sum of the components of $S_1$.

\section{Flatness of discrete-time systems} \label{sec:two}
    In the following, we consider nonlinear time-invariant discrete-time systems of the form
    \begin{equation}\label{eq:sysEq}
      x^{i,+} = f^{i}(x,u), \quad i=1,\dots,n,
    \end{equation}
    with $\dim(x)=n$, $\dim(u)=m$, and smooth functions $f^{i}(x,u)$.
    We further assume that the submersivity condition $\operatorname{rank}(\partial_{(x,u)}f)=n$ holds for the system equations \eqref{eq:sysEq}.
    Since submersivity is necessary for accessibility and thus also for flatness, it is a common assumption in the discrete-time literature. Under this assumption, there always exist $m$ functions $g(x, u)$ such that the map
    \begin{equation}\label{eq:sysEq_ext}
		x^{+} = f(x, u)\,,\quad \zeta = g(x, u)
	\end{equation}
	is locally a diffeomorphism with its inverse given by
    \begin{equation}\label{eq:sysEq_ext_inv}
		x=\psi_x(x^+,\zeta)\,, \quad u=\psi_u(x^+,\zeta)\,.
	\end{equation}
    As defined in \cite{DiwoldKolarSchoberl:2020}, a discrete-time system \eqref{eq:sysEq} is called flat if there exists a one-to-one correspondence between its solutions $(x(k),u(k))$ and the solutions $y(k)$ of a trivial system with the same number of inputs. According to Definition~1 of \cite{DiwoldKolarSchoberl:2020}, this correspondence implies that the values of $x(k)$ and $u(k)$ at a given time step may depend on an arbitrary but finite number of past and future values of $y(k)$, whereas $y(k)$ may depend on an arbitrary but finite number of past and future values of $x(k)$ and $u(k)$.\\
    Next, we recall the modified yet equivalent formulation of flatness used in \cite{SchrotshamerKolarSchoeberl:2025}, together with the subclasses of forward- and backward-flat systems. These arise by restricting the flat output and flat parameterization to forward- and backward-shifts, respectively.
    \vspace{0.05cm}

	\begin{defn}
		\label{def:flatness}The system \eqref{eq:sysEq} is said to be flat around an equilibrium $(x_{0},u_{0})$, if the $n+m$ coordinate functions $x$ and $u$ can be expressed locally by an $m$-tuple of functions
		\begin{equation}
			y^{j}=\varphi^{j}(\zeta_{[-Q_1,-1]},x,u,u_{[1,Q_2]})\,,\quad\,j=1,\ldots,m\label{eq:flat_output}
		\end{equation}
		called the flat output, and its backward- and forward-shifts up to some finite order, i.e.
    	\begin{equation}\label{eq:flat_param}
    		\begin{aligned}
    			x&=F_{x}(y_{[-R_1,R_2-1]})\,, &\; u&=F_{u}(y_{[-R_1,R_2]})\,.
    		\end{aligned}
    	\end{equation}
        \vspace{-0.3cm}
	\end{defn}
    For a given flat output \eqref{eq:flat_output}, the parameterization \eqref{eq:flat_param} of the state- and input variables $x$ and $u$ is unique.\\ 
    The special case of forward-flatness is obtained from Definition~\ref{def:flatness} if no backward-shifts occur in the flat output \eqref{eq:flat_output} and $R_1=(0,\ldots,0)$ holds for the parameterization \eqref{eq:flat_param}.
    \vspace{0.05cm}
	\begin{defn}
		\label{def:forward-flatness}The system \eqref{eq:sysEq} is said
		to be forward-flat around an equilibrium $(x_{0},u_{0})$, if the
		$n+m$ coordinate functions $x$ and $u$ can be expressed locally
		by an $m$-tuple of functions
		\begin{equation*}
			y^{j}=\varphi^{j}(x,u,u_{[1,Q_2]})\,,\quad j=1,\ldots,m\label{eq:forward-flat_output}
		\end{equation*}
		called the forward-flat output, and its forward-shifts up to some finite order, i.e.
    	\begin{equation*}
    		\begin{aligned}
    			x&=F_{x}(y_{[0,R_2-1]})\,, &\;u&=F_{u}(y_{[0,R_2]})\,.
    		\end{aligned}
    	\end{equation*}
        \vspace{-0.3cm}
	\end{defn}
    Conversely, if the flat output \eqref{eq:flat_output} does not involve any forward-shifts and $R_2=(0,\ldots,0)$ holds for the parameterization \eqref{eq:flat_param}, then Definition \ref{def:flatness} leads to the subclass of backward-flat systems.
    \vspace{0.05cm}
	\begin{defn}
		\label{def:backward-flatness}The system \eqref{eq:sysEq} is said
		to be backward-flat around an equilibrium $(x_{0},u_{0})$, if the
		$n+m$ coordinate functions $x$ and $u$ can be expressed locally
		by an $m$-tuple of functions
		\begin{equation}
			y^{j}=\varphi^{j}(\zeta_{[-Q_1,-1]},x,u)\,,\quad j=1,\ldots,m\label{eq:backward-flat_output}
		\end{equation}
		called the backward-flat output, and its backward-shifts up to some finite order, i.e.
        \begin{equation}\label{eq:backward_flat_param}
    		\begin{aligned}
    			x&=F_{x}(y_{[-R_1,-1]})\,, &\; u&=F_{u}(y_{[-R_1,0]})\,.
    		\end{aligned}
	    \end{equation}
        \vspace{-0.3cm}
	\end{defn}

    For systems that are flat in the sense of Definition \ref{def:flatness}, Definition \ref{def:forward-flatness} or Definition \ref{def:backward-flatness}, with $R=R_1+R_2$, the inequality $\# R\geq n$ is satisfied, and the parameterizing map $\left(F_x, F_u\right): \mathbb{R}^{\# R+m} \rightarrow \mathbb{R}^{n+m}$ is a submersion. Let $d$ denote the difference between the dimensions of the domain and the co-domain of this map, that is, $d=\# R+m-(n+m)=\# R-n$. For a flat output $y$, this difference $d$ equals the minimal dimension of the state of a linearizing dynamic feedback. In the special case $\# R=n$, the parameterizing map $\left(F_x, F_u\right): \mathbb{R}^{n+m} \rightarrow \mathbb{R}^{n+m}$ becomes a diffeomorphism, and the system can be exactly linearized by static feedback. Flat outputs with this property are called linearizing outputs.

\section{Exact linearization} \label{sec:three}
    The exact linearization of discrete-time flat systems has already been addressed in \cite{DiwoldKolarSchoberl:2020}, where it was shown that for every discrete-time system that is flat in the sense of Definition \ref{def:flatness}, there exists a linearizing endogenous dynamic feedback. In this section, we show that the subclasses of forward-flat and backward-flat systems with $m$ inputs, to which the result of \cite{DiwoldKolarSchoberl:2020} also applies, can be rendered static feedback linearizable by special types of dynamic feedback.

    For this purpose, we recall from \cite{SchrotshamerKolarSchoeberl:2025} that the properties of forward- and backward-flatness are related to rank conditions regarding certain submatrices of the Jacobian of the flat parameterization. Specifically, depending on whether a given system is forward- or backward-flat, rank defects occur either in the submatrix $\partial_{y_{[R_2]}}F_u$ or in $\partial_{y_{[-R_1]}}F_x$. However, due to their particular location within the Jacobian, these rank defects can be resolved by particularly simple types of dynamic feedback. As in the two-input case considered in \cite{SchrotshamerKolarSchoeberl:2025}, these consist of shift chains, which are attached either in the form of forward-shifts to suitably chosen transformed inputs or as backward-shifts to suitably chosen functions of the system variables. These two types of dynamic feedback are referred to as prolongations and prelongations, respectively.

    \subsection{Linearization of Multi-Input Forward-Flat Systems}
        In the following, we prove that forward-flat systems, which according to Theorem~12 in \cite{KolarDiwoldSchoberl:2021} admit a flat output of the form $y=\varphi(x)$, can be rendered static feedback linearizable through prolongations of suitably chosen transformed inputs. To this end, we use results from \cite{KolarDiwoldGstottnerSchoeberl:2022} derived for flat systems in the general sense of Definition~\ref{def:flatness}. The main idea of the procedure proposed therein is to transform the forward-shifts of the flat output into a special form by successively replacing the original inputs $u$ with new coordinates $\bar u$. This stepwise construction, which provides the basis for the subsequent design of a tracking controller, is carried out recursively over a total of $s$ steps and carries over directly to the special case of forward-flat systems.

        Since our results build on the procedure proposed in \cite{KolarDiwoldGstottnerSchoeberl:2022}, we briefly recall its essential steps. Initially, the components of the forward-flat output $y$ are shifted forward until they depend explicitly on the coordinates $u$, and as many of these functions as possible are introduced as new coordinates ${\bar u}_1$ with $\operatorname{dim}({\bar u}_1)=m_1$. In each subsequent step $i=2,\ldots,s$, the rest of the output components are shifted further until they depend explicitly on the remaining components $u_{\text{rest}_{i-1}}$, and a maximal set of $m_i$ such functions is introduced as new coordinates ${\bar u}_i$. Repeating this procedure until all $m=m_1+\ldots+m_s$ components of $u$ have been replaced yields a minimal multi-index $\kappa=(\kappa_1,\ldots,\kappa_s)$ such that ${\bar u}_i=\delta^{\kappa_i}(\varphi_i)$ with $i=1,\ldots,s$.
        Due to this stepwise construction where in every step $m_i$ new coordinates are introduced, it follows that $y=(y_1,\ldots,y_s)$, $\varphi=(\varphi_1,\ldots,\varphi_s)$, ${\bar u}=({\bar u}_1,\ldots,{\bar u}_s)$, $R_2=(r_{2,1},\ldots,r_{2,s})$ with $y_{i}=(y_i^1,\ldots,y_i^{m_i})$, $\varphi_{i}=(\varphi_i^1,\ldots,\varphi_i^{m_i})$, ${\bar u}_i=({\bar u}_i^1,\ldots,{\bar u}_i^{m_i})$, $r_{2,i}=(r_{2,i}^1,\ldots,r_{2,i}^{m_i})$ and $\kappa_{i}=(\kappa_{i}^1,\ldots,\kappa_{i}^{m_i})$.

        This stepwise construction gives rise to the invertible coordinate transformations
        \begin{equation}\label{eq:trafo_v_prolongation}
            \begin{aligned}
                {\bar u}_1 &= \varphi_{1,[\kappa_1]}(x,u) \\
                {\bar u}_2 &= \varphi_{2,[\kappa_2]}(x,{\bar u}_1,{\bar u}_{1,[1]},\ldots,u_{\text{rest}_1}) \\
                & \hspace{0.2cm} \vdots \\
                {\bar u}_s &= \varphi_{s,[\kappa_s]}(x,{\bar u}_1,{\bar u}_{1,[1]},\ldots,{\bar u}_{s-1},{\bar u}_{s-1,[1]},\ldots,u_{\text{rest}_{s-1}})\,,
            \end{aligned}
        \end{equation}
        which can be abbreviated as ${\bar u}=\Phi_u(x,{\bar u}_{[0,R_2-\kappa]},u)$. Owing to the triangular structure of \eqref{eq:trafo_v_prolongation}, the transformed inputs and their forward-shifts appearing on the right-hand side can be successively substituted, so that the transformation can ultimately be expressed solely in terms of $x$, $u$, and forward-shifts of $u$. 
        Since the transformation involves forward-shifts of the inputs, it represents a generalized input transformation.
        
        Applying the transformation \eqref{eq:trafo_v_prolongation} together with the corresponding transformations of its forward-shifts
        \begin{equation*}\label{eq:trafo_v_plus_prolongation}
            \begin{aligned}
                {\bar u}_{i,[1]} &= \varphi_{i,[\kappa_{i}+1]}(x,{\bar u}_1,{\bar u}_{1,[1]},\ldots,{\bar u}_{i-1},{\bar u}_{i-1,[1]},\ldots,\\
                & \hspace{1.8cm} u_{\text{rest}_{i-1}},u_{\text{rest}_{i-1,[1]}}) \\
                {\bar u}_{i,[2]} &= \varphi_{i,[\kappa_{i}+2]}(x,{\bar u}_1,{\bar u}_{1,[1]},\ldots,{\bar u}_{i-1},{\bar u}_{i-1,[1]},\ldots,\\
                & \hspace{1.8cm} u_{\text{rest}_{i-1}},u_{\text{rest}_{i-1,[1]}},u_{\text{rest}_{i-1,[2]}}) \\
                & \hspace{0.2cm} \vdots 
            \end{aligned}
        \end{equation*}
        for $i=1,\ldots,s-1$, transforms the forward-shifts of the flat output $y$ up to the orders $R_2$ into the form
        \begin{equation}\label{eq:proof_prop_prolonged}
            \begin{aligned}
                y_{1,[0, \kappa_1-1]} & =\varphi_{1,[0, \kappa_1-1]}(x) \\
                y_{1,[\kappa_1, r_{2,1}]} & ={\bar u}_{1,[0, r_{2,1}-\kappa_1]} \\
                & \hspace{0.2cm} \vdots \\
                y_{s-1,[0, \kappa_{s-1}-1]} & =\varphi_{s-1,[0, \kappa_{s-1}-1]}(x, {\bar u}_{1,[0, r_{2,1}-\kappa_1-1]}, \\
                    & \hspace{0.4cm} \ldots, {\bar u}_{s-2,[0, r_{2,s-2}-\kappa_{s-2}-1]}) \\
                y_{s-1,[\kappa_{s-1}, r_{2,s-1}]} & ={\bar u}_{{s-1},[0, r_{2,s-1}-\kappa_{s-1}]}\\
                y_{s,[0, r_{2,s}-1]} & =\varphi_{s,[0, r_{2,s}-1]}(x, {\bar u}_{1,[0, r_{2,1}-\kappa_1-1]}, \\
                &\hspace{0.4cm} \ldots, {\bar u}_{s-1,[0, r_{2,s-1}-\kappa_{s-1}-1]}) \\
                y_{s,[r_{2,s}]} & ={\bar u}_{s}\,.
            \end{aligned}
        \end{equation}
        
        With these preliminaries, we can state the following theorem.
        
        \begin{thm}\label{thm:prolongation}
            For every forward-flat system \eqref{eq:sysEq} with a forward-flat output of the form $y=\varphi(x)$, there exists an invertible, generalized input transformation $u=\hat{\Phi}_u(x,{\bar u}_{[0,R_2-\kappa]})$, such that the $d_2$-fold ($d_2=\# R_2-\#\kappa$) prolonged system
    		\begin{equation}\label{eq:prolonged_sys}
    			\begin{aligned}
    				\Sigma^{(d_2)}: x^{i,+} & =f^i(x, \hat{\Phi}_u(x,{\bar u}_{[0,R_2-\kappa]})) \\
    				{\bar u}^{+}_1 & ={\bar u}_{1,[1]} \\
    				& \hspace{0.2cm}\vdots \\
    				{\bar u}^{+}_{1,[r_{2,1}-\kappa_1-1]} & ={\bar u}_{1,[r_{2,1}-\kappa_1]}\\
                    &\hspace{0.2cm} \vdots \\
                    {\bar u}^{+}_{s-1} & ={\bar u}_{s-1,[1]} \\
    				&\hspace{0.2cm} \vdots \\
    				{\bar u}^{+}_{s-1,[r_{2,s-1}-\kappa_{s-1}-1]} & ={\bar u}_{s-1,[r_{2,s-1}-\kappa_{s-1}]}
    			\end{aligned}
    		\end{equation}
    		with the state $(x^1, \ldots, x^n, {\bar u}_{1,[0,r_{2,1}-\kappa_1-1]}, \ldots,$ ${\bar u}_{s-1,[0,r_{2,s-1}-\kappa_{s-1}-1]})$ and the input $({\bar u}_{1,[r_{2,1}-\kappa_1]},\ldots,$ ${\bar u}_{s-1,[r_{2,s-1}-\kappa_{s-1}]},{\bar u}_s)$ is exactly linearizable by static feedback.
    	\end{thm}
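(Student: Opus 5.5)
The plan is to show that $y$ is a linearizing output of the prolonged system $\Sigma^{(d_2)}$. By the remark at the end of Section~\ref{sec:two}, this holds if the parameterizing map of $\Sigma^{(d_2)}$ by $y$ is a diffeomorphism. Equivalently, the number of shifts of $y$ needed for the state of $\Sigma^{(d_2)}$ must equal its dimension $n+d_2$, and the map must be invertible.

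\textbf{Step 1: the input transformation.} First I construct $\hat\Phi_u$. The transformation \eqref{eq:trafo_v_prolongation} is triangular: in step $i$ the components ${\bar u}_i$ depend on $u_{\text{rest}_{i-1}}$ with a Jacobian of rank $m_i$. So the implicit function theorem lets me solve successively for $u$. The components of $u$ replaced in step $i$ are expressed in terms of $x$, ${\bar u}_{[0,\cdot]}$ and the still remaining components. Substituting back yields $u=\hat\Phi_u(x,{\bar u}_{[0,R_2-\kappa]})$. The orders of the shifts of ${\bar u}_i$ that occur are bounded by $r_{2,i}-\kappa_i$, because only shifts of $y$ up to the orders $R_2$ are involved. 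In particular ${\bar u}_s$ appears unshifted.

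\textbf{Step 2: the state of $\Sigma^{(d_2)}$ in terms of $y$.} Next I use \eqref{eq:proof_prop_prolonged}, which holds for the prolonged system because its dynamics reproduce exactly the forward shifts used there. This gives two facts. The map
\[
(x,{\bar u}_{1,[0,r_{2,1}-\kappa_1-1]},\ldots,{\bar u}_{s-1,[0,r_{2,s-1}-\kappa_{s-1}-1]})\mapsto y_{[0,R_2-1]}
\]
is a map from $\mathbb{R}^{n+d_2}$ to $\mathbb{R}^{\#R_2}$, and $\#R_2=n+d_2$ by the definition $d_2=\#R_2-\#\kappa$. Moreover, $y_{[R_2]}$ equals the new input $({\bar u}_{1,[r_{2,1}-\kappa_1]},\ldots,{\bar u}_s)$.

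Conversely, forward-flatness gives $x=F_x(y_{[0,R_2-1]})$. Also, $y_{i,[\kappa_i,r_{2,i}-1]}={\bar u}_{i,[0,r_{2,i}-\kappa_i-1]}$ recovers the prolongation states directly. Hence the state of $\Sigma^{(d_2)}$ is a function of $y_{[0,R_2-1]}$, and its input is a function of $y_{[R_2]}$.

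\textbf{Step 3: conclusion.} The state map is therefore a smooth map between spaces of equal dimension with a smooth left inverse, so it is a local diffeomorphism. This means $y_{[0,R_2-1]}$ is a valid state coordinate system for $\Sigma^{(d_2)}$ in which the system reads $y_{[R_2-1]}^+=y_{[R_2]}=$ (new input). That is the Brunovsky form, so static feedback linearizes the system. Put differently, $y$ is a linearizing output of $\Sigma^{(d_2)}$ with $\#R_2=\dim(\text{state})$.

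\textbf{Main obstacle.} The hardest point is Step 2's counting and consistency. I must verify that $x$ together with the listed shifts of ${\bar u}_i$, $i<s$, are independent coordinates, so that the state space has dimension $n+d_2$. I must also verify that no shifts of ${\bar u}$ higher than those prescribed enter $\hat\Phi_u$ or the expressions for $y_{[0,R_2-1]}$.

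I would derive independence from the submersivity of $(F_x,F_u)$ combined with \eqref{eq:proof_prop_prolonged}. Indeed, $(F_x,F_u)$ expressed in the coordinates $y_{[0,R_2]}$ is a submersion onto $(x,u)$. Its composition with the invertible generalized transformation must then have full rank onto $(x,{\bar u}_{[0,R_2-\kappa]})$, which shows the required rank.
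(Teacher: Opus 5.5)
Your strategy is the paper's: show that \eqref{eq:proof_prop_prolonged} is a diffeomorphism, so that $y$ is a linearizing output of $\Sigma^{(d_2)}$. However, the one nontrivial step is missing. In Step~2 you assert $\#R_2=n+d_2$ ``by the definition $d_2=\#R_2-\#\kappa$''. The definition only gives $n+d_2=n+\#R_2-\#\kappa$. Your equality is therefore equivalent to $\#\kappa=n$, which is a structural property of forward-flat systems (Theorem~4.5 of \cite{KolarDiwoldGstottnerSchoeberl:2022}) and exactly the fact the paper's proof rests on. Without it, the ``equal dimensions'' claim in Step~3 has no basis.

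What you actually prove is weaker. Your left inverse, built from $x=F_x(y_{[0,R_2-1]})$ and $y_{i,[\kappa_i,r_{2,i}-1]}={\bar u}_{i,[0,r_{2,i}-\kappa_i-1]}$, only shows that the state map is an immersion. That gives $n+d_2\le\#R_2$, equivalently $\#\kappa\ge n$, and the submersivity argument in your last paragraph yields the same inequality again. The ``main obstacle'' you name is also not the real one. The variables $x$ and the listed shifts of ${\bar u}_i$ are automatically independent, because the triangular generalized input transformation is invertible. So the state space has dimension $n+d_2$ by construction.

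The missing reverse inequality comes from an ingredient the paper uses and you never invoke: the forward-shifts of a flat output are functionally independent. By \eqref{eq:proof_prop_prolonged}, the $\#R_2$ independent functions $y_{[0,R_2-1]}$ depend only on the $n+d_2$ state variables of $\Sigma^{(d_2)}$, hence $\#R_2\le n+d_2$. Together with your left inverse this yields $\#\kappa=n$, so you could even avoid citing it. It also makes the state map a local diffeomorphism, and after that your Step~3 goes through as written.
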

    
        \begin{proof}
            The key step of the proof is to show that the map \eqref{eq:proof_prop_prolonged} is a diffeomorphism. Since the forward-shifts of a flat output up to an arbitrary order are functionally independent, it suffices to show that the number of variables $x,\bar{u}_{[0,R_2-\kappa]}$ on the right-hand side of \eqref{eq:proof_prop_prolonged} equals the number of forward-shifts $y_{[0,R_2]}$ on the left-hand side, i.e.,
            \begin{equation*}\label{eq:dim_argument_forward}
                n+\#(R_2-\kappa)+m=\#R_2+m.
            \end{equation*}
            This equality follows immediately from $\#\kappa=n$, which, according to Theorem~4.5 in \cite{KolarDiwoldGstottnerSchoeberl:2022}, holds for every forward-flat system. Moreover, the map \eqref{eq:proof_prop_prolonged} is precisely the inverse of the parameterizing map of the prolonged system \eqref{eq:prolonged_sys}. Consequently, the parameterizing map is itself a diffeomorphism, and the prolonged system is therefore static feedback linearizable, which completes the proof.
        \end{proof}

        \begin{rem}
            Having shown that the prolonged system \eqref{eq:prolonged_sys} is static feedback linearizable, the map \eqref{eq:proof_prop_prolonged} can be used to transform it into Brunovsky normal form.
        \end{rem}

        \begin{rem}
            As already mentioned, the particular structure of $y$ and its forward-shifts in \eqref{eq:proof_prop_prolonged} represents the specialization to forward-flat systems of the more general form derived in \cite{KolarDiwoldGstottnerSchoeberl:2022} for flat systems that may also involve backward-shifts.
            In that work, the main objective was to derive a quasistatic-like feedback for the purpose of trajectory tracking.
            In the present contribution, however, we focus on dynamic feedback linearization and show that the required dynamic extension is of a particularly simple type, namely prolongations of suitably transformed inputs.
        \end{rem}

        A procedure analogous to the one in \cite{KolarDiwoldGstottnerSchoeberl:2022} can also be derived for the backward-flatness case. Based on this procedure, we show in the following that a linearizing dynamic feedback for backward-flat systems can be constructed using similarly simple dynamic extensions.

    \subsection{Linearization of Multi-Input Backward-Flat Systems}
        We now turn to the subclass of backward-flat systems in the sense of Definition~\ref{def:backward-flatness}.
        Although backward-flat outputs are in general of the form \eqref{eq:backward-flat_output}, as shown in \cite{SchrotshamerKolarSchoeberl:2025,SchrotshamerKolarSchoeberl:2026}, the rank condition $\operatorname{rank}(\partial_u \varphi_{[0]}) = m$\footnote{The rank condition $\operatorname{rank}(\partial_u \varphi_{[0]}) = m$ must hold because substituting the map \eqref{eq:sysEq_ext_inv} into the backward-shifts of \eqref{eq:y_backward} yields functions of $x$ and $\zeta_{[-1]},\zeta_{[-2]},\ldots$ only. Otherwise, not all components of $u$ could be expressed in terms of $y_{[-R_1,0]}$.} must hold and every backward-flat system admits a flat output of the form
        \begin{equation}\label{eq:y_backward}
        y^j=\varphi^j(x,u),
        \qquad j=1,\ldots,m\,.
        \end{equation}
        For backward-flat systems with a flat output of the form \eqref{eq:y_backward}, we show that a linearizing dynamic feedback can be constructed from prelongations of suitably chosen functions of the system variables.
        As a first step, we establish that after a suitable coordinate transformation, the backward-shifts $y_{[-R_1,0]}$ of the flat output possess a special structure. For this purpose, we adapt the method proposed in \cite{KolarDiwoldGstottnerSchoeberl:2022} for backward-flat systems. The main idea is to successively replace the coordinates $\ldots,\zeta_{[-2]},\zeta_{[-1]}$ appearing in $\varphi_{[-R_1,0]}$ by the backward-shifts $\ldots,\bar{\zeta}_{[-2]},\bar{\zeta}_{[-1]}$ of the backward-flat output that leads to the construction of a multi-index $\lambda$ such that $\bar\zeta_{[-1-\alpha]}=\delta^{-\lambda-\alpha}(\varphi)$ for $\alpha\geq 0$. Analogously to the procedure in \cite{KolarDiwoldGstottnerSchoeberl:2022}, the required transformation is constructed successively over $s$ steps, indexed by $i=1,\ldots,s$.\\

        In the first step, the components of the backward-flat output \eqref{eq:y_backward} are shifted backward until they depend explicitly on at least one of the coordinates $\zeta_{[-1]}$, i.e., the multi-index $L_1=(l_1^1, \ldots, l_1^m)$ is determined such that
        \begin{equation}\label{eq:backward_L_1}
            \begin{aligned}
            \delta^{-l_1^j}(\varphi^j) & =\varphi_{[-l_1^j]}^j(\zeta_{[-1]}, x) \\
            \delta^{-l_1^j+1}(\varphi^j) & =\varphi_{[-l_1^j+1]}^j(x,u)\,.
            \end{aligned}
        \end{equation}
        Note that the variables $u$ do not necessarily occur in \eqref{eq:backward_L_1}.
        Next, we determine $m_1=\operatorname{rank}(\partial_{\zeta_{[-1]}} \varphi_{[-L_1]})$ and reorder the components of the flat output \eqref{eq:y_backward} and $\zeta$ such that $\operatorname{rank}(\partial_{\zeta_{1,[-1]}} \varphi_{1,[-\lambda_1]})=m_1$, where $\varphi_1=(\varphi^1, \ldots, \varphi^{m_1})$, $\zeta_{1}=(\zeta^1, \ldots, \zeta^{m_1})$, and $\lambda_1=(l_1^1, \ldots, l_1^{m_1})$ consist of the first $m_1$ components of $\varphi, \zeta$, and $L_1$, respectively.
        Subsequently, the $m_1$ functions $\delta^{-\lambda_1}(\varphi_{1})$ are introduced as new coordinates $\bar\zeta_{1,[-1]}$ by means of the coordinate transformation
        \begin{equation}\label{eq:trafo_zeta1_bar}
            \begin{aligned}
            \bar\zeta_{1,[-1]} & =\varphi_{1,[-\lambda_1]}(\zeta_{[-1]}, x) \\
            \zeta_{\text{rest}_1,[-1]} & = (\zeta_{[-1]}^{m_1+1}, \ldots, \zeta_{[-1]}^m) \\
            \bar\zeta_{1,[-2]} & =\varphi_{1,[-\lambda_1-1]}(\zeta_{[-2,-1]}, x) \\
            \zeta_{\text{rest}_1,[-2]} & =(\zeta_{[-2]}^{m_1+1}, \ldots, \zeta_{[-2]}^m) \\
            \bar\zeta_{1,[-3]} & =\varphi_{1,[-\lambda_1-2]}(\zeta_{[-3,-1]}, x) \\
            \zeta_{\text{rest}_1,[-3]} & =(\zeta_{[-3]}^{m_1+1}, \ldots, \zeta_{[-3]}^m)\,, \\
            & \hspace{0.2cm}\vdots
            \end{aligned}
        \end{equation}
        replacing $\zeta_{1,[-1]}$ and their backward-shifts by $\bar\zeta_{1,[-1]}$ and their backward-shifts,
        while leaving the remaining coordinates $\zeta_{\text{rest}_1,[-1]}=(\zeta_{[-1]}^{m_1+1},\ldots,\zeta_{[-1]}^m)$ and their backward-shifts unchanged.\\
        Under the coordinate transformation \eqref{eq:trafo_zeta1_bar}, the backward-shifts of $y$ take the form
        \begin{equation*}
            \begin{aligned}
            y_{1,[-\lambda_1]} &= \bar\zeta_{1,[-1]} \\
            y_{1,[-\lambda_1+1, 0]} &= \varphi_{1,[-\lambda_1+1, 0]}(x,u) \\
            y_{\text{rest}_1,[-L_{\text{rest}_1}]} &= \varphi_{\text{rest}_1,[-L_{\text{rest}_1}]}(\bar\zeta_{1,[-1]}, x) \\
            y_{\text{rest}_1,[-L_{\text{rest}_1}+1,0]} &= \varphi_{\text{rest}_1,[-L_{\text{rest}_1}+1,0]}(x,u)
            \end{aligned}
        \end{equation*}
        with $y_1=(y^1, \ldots, y^{m_1})$, $y_{\text{rest}_1}=(y^{m_1+1}, \ldots, y^m)$, $L_{\text{rest}_1}=(l_1^{m_1+1}, \ldots, l_1^m)$, and $\varphi_{\text{rest}_1,[\alpha]}=(\varphi_{[\alpha]}^{m_1+1}, \ldots, \varphi_{[\alpha]}^m) \circ \hat{\Phi}_1$ and $\hat{\Phi}_1$ denoting the inverse of the transformation \eqref{eq:trafo_zeta1_bar}. By the definition of $m_1$ as the rank of $\partial_{\zeta_{[-1]}}\varphi_{[-L_1]}$, the functions $\varphi_{\text{rest}_1,[-L_{\text{rest}_1}]}$ must be independent of the remaining coordinates $\zeta_{\text{rest}_1,[-1]}$; otherwise, the rank of $\partial_{\zeta_{[-1]}}\varphi_{[-L_1]}$ would exceed $m_1$.\\
    
        In the subsequent steps $i=2,\ldots,s-1$, the remaining components $y_{\text{rest}_{i-1}}$ of the flat output are shifted further until they depend explicitly on the remaining components $\zeta_{\text{rest}_{i-1,[-1]}}$ of the original variables $\zeta_{[-1]}$. That is, the multi-index $L_i=(l_i^1, \ldots, l_i^{m-(m_1+\ldots+m_{i-1})})$ is determined such that
        \begin{equation*}
            \begin{aligned}
            \delta^{-l_i^j}(\varphi_{\text{rest}_{i-1}}^j) &= \varphi_{\text{rest}_{i-1},[-l_i^j]}^j(\zeta_{\text{rest}_{i-1},[-1]}, \ldots, \bar\zeta_{{i-1},[-2]}, \\
            & \hspace{0.5cm} \bar\zeta_{{i-1},[-1]}, \ldots, \bar\zeta_{{1},[-2]}, \bar\zeta_{{1},[-1]}, x) \\
            \delta^{-l_i^j+1}(\varphi_{\text{rest}_{i-1}}^j) &= \varphi_{\text{rest}_{i-1},[-l_i^j+1]}^j(\ldots, \bar\zeta_{{i-1},[-2]}, \bar\zeta_{{i-1},[-1]}, \\
            & \hspace{0.5cm} \ldots, \bar\zeta_{{1},[-2]}, \bar\zeta_{{1},[-1]}, x, u)\,.
            \end{aligned}
        \end{equation*}
        Again, we determine $m_i=\operatorname{rank}(\partial_{\zeta_{\text{rest}_{i-1},[-1]}} \varphi_{\text{rest}_{i-1},[-L_i]})$ and reorder the components of the flat output belonging to $y_{\text{rest}_{i-1}}$ and the components of $\zeta_{\text{rest}_{i-1}}$ such that $\operatorname{rank}(\partial_{\zeta_{i,[-1]}} \varphi_{i,[-\lambda_i]})=m_i$, where $\varphi_i=(\varphi_{\text{rest}_{i-1}}^1, \ldots, \varphi_{\text{rest}_{i-1}}^{m_i})$, $\zeta_{i}=(\zeta_{\text{rest}_{i-1}}^1, \ldots, \zeta_{\text{rest}_{i-1}}^{m_i})$, and $\lambda_i=(l_i^1, \ldots, l_i^{m_i})$ consist of the first $m_i$ components of $\varphi_{\text{rest}_{i-1}}$, $\zeta_{\text{rest}_{i-1}}$, and $L_i$, respectively.
        Subsequently, the $m_i$ functions $\delta^{-\lambda_i}(\varphi_i)$ are introduced as new coordinates $\bar\zeta_{i,[-1]}$ by means of the coordinate transformation
        \begin{equation}\label{eq:trafo_zeta2_bar}
            \begin{aligned}
            \bar\zeta_{i,[-1]} & =\varphi_{i,[-\lambda_i]}(\zeta_{\text{rest}_{i-1},[-1]},\ldots, \bar\zeta_{i-1,[-2]}, \bar\zeta_{i-1,[-1]}, \ldots, \\
            & \hspace{0.5cm} \bar\zeta_{1,[-2]}, \bar\zeta_{1,[-1]}, x) \\
            \zeta_{\text{rest}_i,[-1]} & = (\zeta_{[-1]}^{m_1+\ldots+m_i+1}, \ldots, \zeta_{[-1]}^m) \\
            \bar\zeta_{i,[-2]} & =\varphi_{i,[-\lambda_i-1]}(\zeta_{\text{rest}_{i-1},[-2,-1]}, \ldots, \bar\zeta_{i-1,[-2]}, \bar\zeta_{i-1,[-1]}, \\
            & \hspace{0.5cm} \ldots, \bar\zeta_{1,[-2]}, \bar\zeta_{1,[-1]}, x) \\
            \zeta_{\text{rest}_i,[-2]} & =(\zeta_{[-2]}^{m_1+\ldots+m_i+1}, \ldots, \zeta_{[-2]}^m) \\
            \bar\zeta_{i,[-3]} & =\varphi_{i,[-\lambda_i-2]}(\zeta_{\text{rest}_{i-1},[-3,-1]}, \ldots, \bar\zeta_{i-1,[-2]}, \bar\zeta_{i-1,[-1]}, \\
            & \hspace{0.5cm} \ldots, \bar\zeta_{1,[-2]}, \bar\zeta_{1,[-1]}, x) \\
            \zeta_{\text{rest}_i,[-3]} & =(\zeta_{[-3]}^{m_1+\ldots+m_i+1}, \ldots, \zeta_{[-3]}^m)\,, \\
            & \hspace{0.2cm}\vdots
            \end{aligned}
        \end{equation}
        which replaces $\zeta_{i,[-1]}=(\zeta_{[-1]}^{m_1+\ldots+m_{i-1}+1}, \ldots, \zeta_{[-1]}^{m_1+\ldots+m_i})$ and their backward-shifts by $\bar\zeta_{i,[-1]}$ and their backward-shifts, while leaving the remaining coordinates $\zeta_{\text{rest}_i,[-1]}=(\zeta_{[-1]}^{m_1+\ldots+m_i+1}, \ldots, \zeta_{[-1]}^m)$ and their backward-shifts unchanged.\\
        Under the coordinate transformation \eqref{eq:trafo_zeta2_bar}, the backward-shifts of $y$ take the form
        \begin{equation*}
            \begin{aligned}
            y_{1,[-\lambda_1]} &= \bar\zeta_{1,[-1]} \\
            y_{1,[-\lambda_1+1, 0]} &= \varphi_{1,[-\lambda_1+1, 0]}(x,u) \\
            & \hspace{0.2cm} \vdots\\
            y_{i,[-\lambda_i]} &= \bar\zeta_{i,[-1]} \\
            y_{i,[-\lambda_i+1, 0]} &= \varphi_{i,[-\lambda_i+1, 0]}(\ldots,\bar\zeta_{i-1,[-2]},\bar\zeta_{i-1,[-1]},\ldots, \\
            & \hspace{0.5cm} \bar\zeta_{1,[-2]},\bar\zeta_{1,[-1]},x,u) \\
            y_{\text{rest}_i,[-L_{\text{rest}_i}]} &= \varphi_{\text{rest}_i,[-L_{\text{rest}_i}]}(\bar\zeta_{i,[-1]},\ldots,\bar\zeta_{i-1,[-2]}, \\
            & \hspace{0.5cm}\bar\zeta_{i-1,[-1]},\ldots,\bar\zeta_{1,[-2]},\bar\zeta_{1,[-1]},x) \\
            y_{\text{rest}_i,[-L_{\text{rest}_i}+1,0]} &= \varphi_{\text{rest}_i,[-L_{\text{rest}_i}+1,0]}(\ldots,\bar\zeta_{i-1,[-2]},\bar\zeta_{i-1,[-1]}, \\
            & \hspace{0.5cm} \ldots,\bar\zeta_{1,[-2]},\bar\zeta_{1,[-1]},x,u)
            \end{aligned}
        \end{equation*}    
        with $y_i=(y^{m_1+\ldots +m_{i-1}+1}, \ldots, y^{m_1+\ldots+m_i})$, $y_{\text{rest}_i}=(y^{m_1+\ldots+m_i+1}, \ldots, y^m)$, $L_{\text{rest}_i}=(l_i^{m_i+1}, \ldots, l_i^{m-(m_1+\ldots+m_{i-1})})$, and $\varphi_{\text{rest}_i,[\alpha]}=(\varphi_{\text{rest}_{i-1},[\alpha]}^{m_i+1}, \ldots, \varphi_{\text{rest}_{i-1},[\alpha]}^{m-(m_1+\ldots+m_{i-1})}) \circ \hat{\Phi}_i$ with the inverse $\hat{\Phi}_i$ of the transformation \eqref{eq:trafo_zeta2_bar}.
        The functions $\varphi_{\text{rest}_i,[-L_{\text{rest}_i}]}$ are again independent of $\zeta_{[-1],\text{rest}_i}$, since otherwise $\operatorname{rank}(\partial_{\zeta_{\text{rest}_{i-1},[-1]}} \varphi_{\text{rest}_{i-1},[-L_i]})$ would have been larger than $m_i$.\\
        
        The above procedure is repeated until, at some step $s$, the condition $m_s=\operatorname{rank}(\partial_{\zeta_{\text{rest}_{s-1},[-1]}} \varphi_{\text{rest}_{s-1},[-L_s]})=\operatorname{dim}(\zeta_{\text{rest}_{s-1},[-1]})$ with the multi-index $L_s=(l_s^1, \ldots, l_s^{m-m_1-\cdots-m_{s-1}})$ defined by
        \begin{equation*}
            \begin{aligned}
            \delta^{-l_s^j}(\varphi_{\text{rest}_{s-1}}^j) & =\varphi_{\text{rest}_{s-1},[-l_s^j]}^j(\zeta_{\text{rest}_{s-1},[-1]}, \ldots, \bar\zeta_{s-1,[-2]}, \\
            &  \hspace{0.5cm} \bar\zeta_{s-1,[-1]}, \ldots, \bar\zeta_{1,[-2]}, \bar\zeta_{1,[-1]}, x) \\
            \delta^{-l_s^j+1}(\varphi_{\text{rest}_{s-1}}^j)& =\varphi_{\text{rest}_{s-1},[-l_s^j+1]}^j(\ldots, \bar\zeta_{s-1,[-2]}, \bar\zeta_{s-1,[-1]},  \\
            & \hspace{0.5cm} \ldots, \bar\zeta_{1,[-2]}, \bar\zeta_{1,[-1]}, x, u)
            \end{aligned}
        \end{equation*}
        is satisfied. Finally, the coordinate transformation
        \begin{equation*}
          \begin{aligned}
            \bar\zeta_{s,[-1]}
            &=
            \varphi_{s,[-\lambda_s]}
            (\zeta_{\text{rest}_{s-1},[-1]},\ldots,\bar\zeta_{s-1,[-2]},\\
            &\hspace{0.5cm}
            \bar\zeta_{s-1,[-1]},\ldots,\bar\zeta_{1,[-2]},\bar\zeta_{1,[-1]},x)
          \end{aligned}
        \end{equation*}
        replaces the last remaining coordinates $\zeta_{\text{rest}_{s-1},[-1]}$ by $\bar\zeta_{s,[-1]}$.
        Analogously to the forward-flat case, the transformations
        \begin{equation*}\label{eq:trafo_zeta_prolongation}
          \begin{aligned}
            \bar\zeta_{1,[-1]}
            &=
            \varphi_{1,[-\lambda_1]}(\zeta_{[-1]},x), \\
            \bar\zeta_{2,[-1]}
            &=
            \varphi_{2,[-\lambda_2]}
            (\zeta_{\text{rest}_{1},[-1]},\ldots,
            \bar\zeta_{1,[-2]},\bar\zeta_{1,[-1]},x), \\
            &\hspace{0.2cm}\vdots \\
            \bar\zeta_{s,[-1]}
            &=
            \varphi_{s,[-\lambda_s]}
            (\zeta_{\text{rest}_{s-1},[-1]},\ldots,\bar\zeta_{s-1,[-2]},\\
            &\hspace{0.5cm}
            \bar\zeta_{s-1,[-1]},\ldots,\bar\zeta_{1,[-2]},\bar\zeta_{1,[-1]},x),
          \end{aligned}
        \end{equation*}
        form an invertible triangular system of equations, which can be abbreviated as $\bar\zeta_{[-1]}=\Phi_\zeta(\zeta_{[-1]},\bar\zeta_{[-R_1+\lambda-1,-1]},x)$.
        The variables $\bar\zeta_{[-1]}$ and their backward-shifts on the right-hand side can be successively substituted, such that the transformation depends only on $x$, $\zeta_{[-1]}$, and backward-shifts of $\zeta_{[-1]}$, and thus again represents a generalized coordinate transformation.

        This procedure results in the special form
        \begin{equation}\label{eq:y_backward_transformed_final}
            \begin{aligned}
            y_{1,[-\lambda_1]} &= \bar\zeta_{1,[-1]} \\
            y_{1,[-\lambda_1+1, 0]} &= \varphi_{1,[-\lambda_1+1, 0]}(x,u) \\
            y_{2,[-\lambda_2]} &= \bar\zeta_{2,[-1]} \\
            y_{2,[-\lambda_2+1, 0]} &= \varphi_{2,[-\lambda_2+1, 0]}(\ldots,\bar\zeta_{1,[-2]},\bar\zeta_{1,[-1]},x,u) \\
            &\hspace{0.2cm}\vdots\\
            y_{s-1,[-\lambda_{s-1}]} &= \bar\zeta_{s-1,[-1]} \\
            y_{s-1,[-\lambda_{s-1}+1, 0]} &= \varphi_{s-1,[-\lambda_{s-1}+1, 0]}(\ldots,\bar\zeta_{s-2,[-2]},\\
            &\hspace{1cm}\bar\zeta_{s-2,[-1]},\ldots,\bar\zeta_{1,[-2]},\bar\zeta_{1,[-1]},x,u) \\
            y_{s,[-\lambda_s]} &= \bar\zeta_{s,[-1]} \\
            y_{s,[-\lambda_s+1, 0]} &= \varphi_{s,[-\lambda_s+1, 0]}(\ldots,\bar\zeta_{s-1,[-2]},\bar\zeta_{s-1,[-1]},\\
            &\hspace{2cm}\ldots,\bar\zeta_{1,[-2]},\bar\zeta_{1,[-1]},x,u) \\
            \end{aligned}
        \end{equation}
        of the flat output and its backward-shifts $y_{[-\lambda,0]}$ with $\varphi_s=\varphi_{\text{rest}_{s-1}}$ and $\lambda_s=L_s$.\\

        The following proposition is the counterpart of Theorem~4.5 in \cite{KolarDiwoldGstottnerSchoeberl:2022} for backward-shifts of the flat output which are of the form \eqref{eq:y_backward_transformed_final}.
        \begin{prop}\label{prop:properties_backward_flat}
            For every backward-flat system \eqref{eq:sysEq} with a backward-flat output \eqref{eq:y_backward} and multi-index $\lambda=(\lambda_1, \ldots, \lambda_s)$ derived with the above procedure, the following applies:
            \begin{itemize}
            \item[(i)] the procedure terminates after $s \leq m$ steps
            \item[(ii)] $\lambda-1 \leq R_1$
            \item[(iii)] $\# (\lambda-1) = n$
            \end{itemize}
        \end{prop}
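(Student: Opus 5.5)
Each item rests on one fact: the backward-shifts of a flat output up to any finite order are functionally independent, and by \eqref{eq:backward_flat_param} the variables $(x,u)$, and hence, via \eqref{eq:sysEq_ext_inv}, all of $x,u,\zeta_{[-1]},\zeta_{[-2]},\ldots$, can be expressed through $y_{[-R_1,0]}$ and its shifts. The strategy mirrors the proof of Theorem~4.5 in \cite{KolarDiwoldGstottnerSchoeberl:2022}, with the roles of forward-shifts and $u$ replaced by backward-shifts and $\zeta_{[-1]}$.

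\emph{(i)} In each step $i$ the procedure selects $m_i\geq 1$ new coordinates. It therefore suffices to exclude $m_i=0$. If $m_i=0$, the remaining components $y_{\text{rest}_{i-1}}$ and all their backward-shifts would be independent of $\zeta_{\text{rest}_{i-1},[-1]}$ and of all its backward-shifts. This follows from the triangular structure: $\zeta_{\text{rest}_{i-1},[-1-\alpha]}$ enters a shifted function only through $\zeta_{\text{rest}_{i-1},[-1]}$ shifted back by $\alpha$. Likewise, $y_1,\ldots,y_{i-1}$ at orders beyond $\lambda$ are functions of $\bar\zeta$ alone. Consequently the whole set $y_{[-R_1-\alpha,0]}$ could not parameterize $\zeta_{\text{rest}_{i-1},[-1]}$, contradicting flatness. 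Since each step consumes at least one component of $\zeta$, we get $s\leq m$. Well-definedness of the $L_i$ (every remaining component eventually depends on $\zeta_{\text{rest}_{i-1},[-1]}$) follows from the same argument applied componentwise, and I would state it as part of the induction.

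\emph{(ii)} Assume $\lambda_i^j-1>R_{1}$ for some component. The parameterization \eqref{eq:backward_flat_param} expresses $x$ and $u$ through $y_{[-R_1,0]}$ only. Shifting it back gives $\zeta_{[-1]}=g(\psi\ldots)$, a function of $y_{[-R_1-1,-1]}$. Conversely, by construction the shifts $y^j_{[-\lambda^j_i+1,0]}$ depend only on $x,u$ and on $\bar\zeta$ of strictly earlier blocks. I would substitute the parameterization into the identity $y^j_{[-\lambda^j+1]}=\varphi_{[-\lambda^j+1]}(\ldots)$. The left side is an independent coordinate of order $-\lambda^j+1<-R_1$. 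On the right, $x$ and $u$ involve only orders $\geq -R_1$, and the earlier-block $\bar\zeta$'s are themselves shifts of other components. Functional independence then forces a contradiction. This step needs care in tracking which shifts of which components appear, and I would argue it blockwise by induction over $i$.

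\emph{(iii)} This is the main obstacle. I plan a dimension count. By (ii) together with \eqref{eq:y_backward_transformed_final}, the map
\[
(x,u,\bar\zeta_{[-1]},\bar\zeta_{[-2]},\ldots)\mapsto y_{[-\lambda-\alpha,0]}
\]
is triangular. The top rows are $y_{[-\lambda-\alpha,-\lambda]}=\bar\zeta_{[-1-\alpha,-1]}$, and the lower rows $y_{[-\lambda+1,0]}$ are functions of $(x,u)$ and previously listed $\bar\zeta$'s. Since the $y$-shifts are independent and $(x,u,\bar\zeta)$ is a coordinate system, the map is a local diffeomorphism once the orders are large enough. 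Hence the block $y_{[-\lambda+1,0]}$, of size $\#\lambda$, must be in bijection with $(x,u)$ modulo the $\bar\zeta$'s. This gives $\#\lambda=n+m$, i.e.\ $\#(\lambda-1)=n$.

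The delicate points are injectivity and surjectivity. Independence of the $y$-shifts gives $\#\lambda\leq n+m$, because the block $y_{[-\lambda+1,0]}$ consists of independent functions of $(x,u,\bar\zeta)$ that remain independent modulo $\bar\zeta$. For the reverse inequality $\#\lambda\geq n+m$, I would use the parameterization: $(x,u)$ must be recoverable from $y_{[-R_1,0]}$. By (ii), the entries of $y_{[-R_1,0]}$ beyond the block $y_{[-\lambda+1,0]}$ are exactly $\bar\zeta$-coordinates. So $(x,u)$ is a function of $y_{[-\lambda+1,0]}$ together with $\bar\zeta$, and a rank count gives the inequality.
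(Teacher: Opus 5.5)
Your proposal is correct and follows essentially the paper's proof. In (i), each step replaces at least one component of $\zeta_{[-1]}$. In (ii), you substitute the flat parameterization and $\bar\zeta_{[-1-\alpha]}=y_{[-\lambda-\alpha]}$ into the equations for $y_{i,[-\lambda_i+1,0]}$ and use functional independence. In (iii), you count ranks modulo $\mathrm{d}\bar\zeta$, comparing $(x,u)$ with $y_{[-\lambda+1,0]}$ rather than the paper's $x$ with $y_{[-\lambda+1,-1]}$ (this only adds $m$ to both sides), and you spell out the two inequalities that the paper compresses into one sentence.

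There is one small imprecision in (i). The paper takes for granted that each component separately has a finite $L_i$, and this does not follow from your parameterization argument applied to a single component, since the other components could still carry $\zeta_{\text{rest}_{i-1}}$. It does follow from functional independence exactly as in your bound $\#\lambda\le n+m$, because all backward-shifts of such a component would be functions of $(x,u,\bar\zeta)$.
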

        \begin{proof}
            (i): In every step $i\geq 1$, the remaining components $\varphi_{\text{rest}_{i-1}}$ of the flat output can be backward-shifted until each component depends explicitly on at least one of the remaining coordinates $\zeta_{\text{rest}_{i-1,[-1]}}$, implying that $\operatorname{rank}(\partial_{\zeta_{\text{rest}_{i-1,[-1]}}} \varphi_{\text{rest}_{i-1},[-L_i]}) \geq 1$, where $\varphi_{\text{rest}_0}=\varphi$ and $\zeta_{\text{rest}_{0,[-1]}}=\zeta_{[-1]}$ for $i=1$. Hence, at least one component of $\zeta_{[-1]}$ can be replaced in each step, and the procedure terminates after at most $\operatorname{dim}(\zeta)=m$ steps.
            
            (ii):
            To prove the inequality $\lambda - 1 \leq R_1$, we consider the subset of equations
            \begin{equation}\label{eq:proof_thm1}
            \begin{aligned}
                y_{i,[-\lambda_i^{j_i}+1,0]}^{j_i} &= \varphi_{i,[-\lambda_i^{j_i}+1,0]}^{j_i}
                (\ldots,\bar\zeta_{i-1,[-2]},\bar\zeta_{i-1,[-1]},\ldots, \\
                & \hspace{0.5cm} \bar\zeta_{1,[-2]},\bar\zeta_{1,[-1]},x,u)
            \end{aligned}
            \end{equation}
            with $j_i = 1,\ldots,m_i$, $i = 1,\ldots,s$ of \eqref{eq:y_backward_transformed_final}.
            Substituting the parameterization \eqref{eq:backward_flat_param} of the state and input by the flat output into \eqref{eq:proof_thm1} and renaming the backward-shifts $\bar\zeta_{[-1-\alpha]}$, $\alpha \geq 0$, according to $\bar\zeta_{[-1-\alpha]} = y_{[-\lambda-\alpha]}$, the equations must be satisfied identically.
            Since the backward-shifts $y_{i,[-\lambda_i^{j_i}+1,0]}^{j_i}$ on the left-hand side of \eqref{eq:proof_thm1} are not contained in the quantities $\ldots,\bar\zeta_{i-1,[-2]},\bar\zeta_{i-1,[-1]},\ldots,\bar\zeta_{1,[-2]},\bar\zeta_{1,[-1]}$ on the right-hand side, all variables $y_{i,[-\lambda_i^{j_i}+1,0]}^{j_i}$ on the left-hand sides must occur in the parameterization of $x$ or $u$, i.e.\ $\{-\lambda_i^{j_i}+1,\ldots,0\} \subseteq \{-r_{1,i}^{j_i},\ldots,0\}$. This is only possible if $-\lambda_i^{j_i}+1 \geq -r_{1,i}^{j_i}$ for all $j_i$ and $i$, i.e. $\lambda - 1 \leq R_1$.

            (iii): To prove the relation $\#(\lambda-1) = n$, recall that from $x=F_x(y_{[-R_1,-1]})$ it follows that
            \begin{equation*}
                \mathrm{d}x \in \operatorname{span}\{\mathrm{d}\varphi_{[-R_1,-1]}\}\,.
            \end{equation*}
            In the coordinates constructed for deriving \eqref{eq:y_backward_transformed_final}, the backward-shifts $y_{[-R_1,-\lambda]}$ correspond to backward-shifts of $\bar\zeta$. Hence, only the differentials $\mathrm{d} y_{[-\lambda+1,-1]}$ of the remaining backward-shifts can yield independent linear combinations which are contained in $\operatorname{span}\{\mathrm{d}x\}$. Due to the functional independence of the backward-shifts of the backward-flat output, there exist exactly $\#(\lambda-1)$ such independent linear combinations. Since \(\dim(x)=n\), it follows that
            \begin{equation*}
                \#(\lambda-1)=n\,.
            \end{equation*}
        \end{proof}

        With item (i) and (ii) from Proposition \ref{prop:properties_backward_flat}, the backward-shifts $y_{[-R_1,0]}$ of \eqref{eq:y_backward_transformed_final} actually take the form
        \begin{equation}\label{eq:y_backward_transformed}
            \begin{aligned}
            y_{1,[-r_{1,1},-\lambda_1]} &= \bar\zeta_{1,[-r_{1,1}+\lambda_1-1,-1]} \\
            y_{1,[-\lambda_1+1, 0]} &= \varphi_{1,[-\lambda_1+1, 0]}(x,u) \\
            &\hspace{0.2cm}\vdots\\
            y_{s-1,[-r_{1,s-1},-\lambda_{s-1}]} &= \bar\zeta_{s-1,[-r_{1,s-1}+\lambda_{s-1}-1,-1]} \\
            y_{s-1,[-\lambda_{s-1}+1, 0]} &= \varphi_{s-1,[-\lambda_{s-1}+1, 0]}(\bar\zeta_{1,[-r_{1,1}+\lambda_1-1,-1]},\\
            &\ldots,\bar\zeta_{s-2,[-r_{1,s-2}+\lambda_{s-2}-1,-1]},x,u) \\
            y_{s,[-r_{1,s}, -\lambda_s]} &= \bar\zeta_{s,[-r_{1,s}+\lambda_{s}-1,-1]} \\
            y_{s,[-\lambda_s+1, 0]} &= \varphi_{s,[-\lambda_s+1, 0]}(\bar\zeta_{1,[-r_{1,1}+\lambda_1-1,-1]},\\
            &\ldots,\bar\zeta_{s-1,[-r_{1,s-1}+\lambda_{s-1}-1,-1]},x,u)\,,
            \end{aligned}
        \end{equation}
       where $r_{1,i}=(r_{1,i}^1,\ldots,r_{1,i}^{m_i})$ denotes the components of the multi-index $R_1$ corresponding to the components $y_i=(y_i^1,\ldots,y_i^{m_i})$ of the flat output for $i=1,\ldots,s$.\\
       Based on the obtained structure of the backward-flat output and its backward-shifts $y_{[-R_1,0]}$, we can now propose a special type of dynamic feedback that linearizes such systems.
       
        \begin{thm}\label{thm:prelongation}
            For every backward-flat system \eqref{eq:sysEq} with a backward-flat output of the form $y=\varphi(x,u)$, there exists an invertible coordinate transformation $\zeta_{[-1]}=\hat{\Phi}_\zeta(x,\bar \zeta_{[-R_1+\lambda-1,-1]})$, such that the $d_1$-fold ($d_1=\# R_1-\#\lambda+m$) prelonged system
    		\begin{equation}\label{eq:prelonged_sys}
    			\begin{aligned}
    				\Sigma^{(d_1)}: \bar \zeta^{+}_{1,[-r_{1,1}+\lambda_1-1]} & = \bar \zeta_{1,[-r_{1,1}+\lambda_1]} \\
    				& \vdots \\
    				\bar \zeta^{+}_{1,[-1]} & = \bar g_1(x,u)\\
                    & \vdots \\
                    \bar \zeta^{+}_{s-1,[-r_{1,s-1}+\lambda_{s-1}-1]} & = \bar \zeta_{s-1,[-r_{1,s-1}+\lambda_{s-1}]} \\
    				& \vdots \\
    				\bar \zeta^{+}_{s-1,[-1]} & = \bar g_{s-1}(\bar\zeta_{1,[-r_{1,1}+\lambda_1-1,-1]},\ldots\\
                    &\hspace{0.5cm},\bar\zeta_{s-1,[-r_{1,s-1}+\lambda_{s-1}-1,-1]},x)\\
                    x^{i,+} & =f^i(x, u)
    			\end{aligned}
    		\end{equation}
    		with the state $(\bar \zeta_{1,[-r_{1,1}+\lambda_1-1,-1]}, \ldots,$ $ \bar \zeta_{s-1,[-r_{1,s-1}+\lambda_{s-1}-1,-1]}, x^1, \ldots, x^n)$ and the original input $u$ is exactly linearizable by static feedback.
    	\end{thm}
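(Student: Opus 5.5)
The proof will mirror that of Theorem~\ref{thm:prolongation}: the core claim is that the map \eqref{eq:y_backward_transformed} is a local diffeomorphism between the variables $y_{[-R_1,0]}$ and the system variables of the prelonged system \eqref{eq:prelonged_sys}.

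First I set up the prelonged system. The coordinate transformation $\zeta_{[-1]}=\hat{\Phi}_\zeta(x,\bar\zeta_{[-R_1+\lambda-1,-1]})$ is obtained by inverting the triangular system $\bar\zeta_{[-1]}=\Phi_\zeta(\zeta_{[-1]},\bar\zeta_{[-R_1+\lambda-1,-1]},x)$. This inversion is possible by the rank conditions $\operatorname{rank}(\partial_{\zeta_{i,[-1]}}\varphi_{i,[-\lambda_i]})=m_i$ imposed at each step, which make $\partial_{\zeta_{[-1]}}\Phi_\zeta$ block triangular with regular diagonal blocks.

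Next I justify the form of the last equation in each chain. Shifting $\bar\zeta_{i,[-1]}=\delta^{-\lambda_i}(\varphi_i)$ forward gives $\bar\zeta_{i,[-1]}^+=\delta^{-\lambda_i+1}(\varphi_i)$. By \eqref{eq:y_backward_transformed} this is a function of $x$, $u$ and the earlier $\bar\zeta_{k,[\cdot]}$ with $k<i$. This yields the functions $\bar g_i$. The remaining equations of each chain are pure shift equations, and the block $x^+=f(x,u)$ is unchanged. The block $i=s$ needs no prelongation in the state beyond $\bar\zeta_{s,[-r_{1,s}+\lambda_s-1,-1]}$; I would check that the way the statement includes or omits this chain is consistent with the dimension count $d_1$.

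Then I count dimensions. The prelonged state together with the input $u$ has dimension
\[
n+\#(R_1-\lambda+1)+m .
\]
Using Proposition~\ref{prop:properties_backward_flat}(iii), i.e.\ $\#(\lambda-1)=n$, this equals $\#R_1+m$. That is exactly the number of components of $y_{[-R_1,0]}$, the left-hand side of \eqref{eq:y_backward_transformed}. Proposition~\ref{prop:properties_backward_flat}(ii) guarantees that all chain lengths $r_{1,i}-\lambda_i+1$ are nonnegative, so the prelonged state is well defined.

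The backward-shifts of a flat output up to any finite order are functionally independent. Hence the map from $(\bar\zeta_{[-R_1+\lambda-1,-1]},x,u)$ to $y_{[-R_1,0]}$ given by \eqref{eq:y_backward_transformed} is a submersion between spaces of equal dimension, and therefore a local diffeomorphism. Its inverse is the parameterizing map of $\Sigma^{(d_1)}$ with respect to $y$: the chain variables are $y_{[-r_1,-\lambda]}$, and $(x,u)$ come from \eqref{eq:backward_flat_param}. Thus $y$ is a linearizing output of $\Sigma^{(d_1)}$, with parameterization involving exactly as many shifts as state dimension. By the remark after Definition~\ref{def:backward-flatness}, the prelonged system is then static feedback linearizable.

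The main obstacle I expect is verifying that \eqref{eq:y_backward_transformed} is genuinely a map on the prelonged state space. Concretely, no $\bar\zeta$ variable outside the chains $\bar\zeta_{[-R_1+\lambda-1,-1]}$ may appear on the right-hand side. This is where part (ii) of the proposition is used once more: a deeper $\bar\zeta_{k,[-\beta]}$ would correspond to $y_{k,[-\lambda_k-\beta+1]}$ with order beyond $r_{1,k}$, and such a shift cannot occur in the parameterization \eqref{eq:backward_flat_param}.
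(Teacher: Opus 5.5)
Your proposal follows the paper's proof essentially step for step: Proposition~\ref{prop:properties_backward_flat}(iii) gives the dimension count $\#(R_1-\lambda+1)+n+m=\#R_1+m$, functional independence of the backward-shifts makes \eqref{eq:y_backward_transformed} a diffeomorphism, and its inverse is the parameterizing map of the prelonged system, so $y$ becomes a linearizing output. Your truncation argument for the $\bar\zeta$-dependence is a useful detail the paper only asserts. The point you leave open about the $s$-th chain closes quickly: in \eqref{eq:y_backward_transformed} the variables $\bar\zeta_{s,[\cdot]}$ occur only in the identity rows, so the inverse gives $(x,u)$ independent of $y_{s,[-r_{1,s},-\lambda_s]}$; minimality of $R_1$ together with Proposition~\ref{prop:properties_backward_flat}(ii) then forces $r_{1,s}=\lambda_s-1$, so that chain is empty and the state in the statement has exactly $n+d_1$ components.
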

        
        \begin{proof}
            First, we prove that \eqref{eq:y_backward_transformed} defines a diffeomorphism. Since the backward-shifts of a flat output up to an arbitrary order are functionally independent, it remains to show that the $\#(R_1-\lambda+1)+n+m$ variables $\bar{\zeta}_{[-R_{1}+\lambda-1,-1]}, x, u$ can be expressed in terms of the $\#R_1+m$ backward-shifts $y_{[-R_{1},0]}$.
            Using the identity $\#(\lambda-1)=n$ from item (iii) in Proposition \ref{prop:properties_backward_flat}, it can be shown with
            \begin{equation*}\label{eq:dim_argument_forward}
        		\#(R_1-\lambda+1)+n+m =\# R_1+m
        	\end{equation*}
            that the dimensions indeed match.        
            Again, the diffeomorphism \eqref{eq:y_backward_transformed} is just the inverse of the parameterizing map of the prelonged system \eqref{eq:prelonged_sys} with $\bar g_1=\varphi_{1,[-\lambda_1+1]}(x,u),\ldots,\bar g_s=\varphi_{s,[-\lambda_s+1]}(\bar\zeta_{1,[-r_{1,1}+\lambda_1-1,-1]},\ldots,\bar\zeta_{s-1,[-r_{1,s-1}+\lambda_{s-1}-1,-1]},x)$. Therefore, the prelonged system is static feedback linearizable, because the corresponding parameterizing map is also a diffeomorphism.
        \end{proof}

        \begin{rem}
            As in the forward-flat case, the map \eqref{eq:y_backward_transformed} can be used to transform the static feedback linearizable prelonged system \eqref{eq:prelonged_sys} into Brunovsky normal form.
        \end{rem}

\section{Examples} \label{sec:four}
        Finally, we illustrate the results of Theorems~\ref{thm:prolongation} and~\ref{thm:prelongation} by means of an academic example with $m=3$ inputs and a practical quadrotor example with $m=4$ inputs, respectively.
        \vspace{0.1cm}
        \begin{exmp}
            Consider the discrete-time system
            \begin{equation}\label{eq:sys_academic}
                \begin{aligned}
                    x^{1,+} &= x^1 + x^4\\
                    x^{2,+} &= x^2 + x^5\\
                    x^{3,+} &= x^3 + x^6\\
                    x^{4,+} &= \tfrac{(x^4 + x^7) x^5 x^8 + (x^6 + x^9) u^1}{x^5 x^8}\\
                    x^{5,+} &= u^1 + x^5\\
                    x^{6,+} &= \tfrac{x^3 (x^{5})^2 + (x^6 + x^8 (x^7 + 1)) x^5 + (x^6 + x^9) u^1}{x^5}\\
                    x^{7,+} &= u^2 + x^7\\
                    x^{8,+} &= \tfrac{(x^{5})^2 x^8 + x^5 (x^{8})^2 + x^6 + x^9}{x^5 x^8}\\
                    x^{9,+} &= \tfrac{1}{x^5 (x^{8})^2} (-x^8 (-x^{10} (x^8)^2 + x^3 x^8 - x^6 - x^9) (x^{5})^2 \\
                    & \hspace{0.4cm} - (x^8 (x^7 + 1) - x^9) (x^{8})^2 x^5 + (x^6 + x^9)^2)\\
                    x^{10,+} &= u^3 + x^{10}
                \end{aligned}
            \end{equation}
            with $n=10$ and $m=3$, obtained by applying an explicit Euler discretization with sampling time $T_s=1$ to the structurally flat triangular form of the academic example in \cite{HartlGstoettnerSchoeberl:2026}. Using the forward-flatness test proposed in \cite{KolarDiwoldSchoberl:2023}, it can be shown that the discrete-time system \eqref{eq:sys_academic} is forward-flat with the $x$-flat output $y=(x^1,x^2,x^3)$.\\
            Employing the stepwise procedure in \cite{KolarDiwoldGstottnerSchoeberl:2022} for the forward-flat system \eqref{eq:sys_academic} results in the coordinate transformations ${\bar u}=\Phi_u(x,{\bar u}_{[0,R_2-\kappa]},u)$ given by
            \begin{equation}\label{eq:sys_academic_trafo}
                \begin{aligned}
                    {\bar u}_1 &= \varphi_{1,[2]}(x^1,x^4,x^5,x^6,x^7,x^8,x^9,u^1) \\
                    {\bar u}_2 &= \varphi_{2,[3]}(x^1,x^2,x^4,x^5,x^6,x^7,x^8,x^9,x^{10},{\bar u}_{1,[0,1]},u^2) \\
                    {\bar u}_3 &= \varphi_{3,[5]}(x,{\bar u}_{1,[0,3]},{\bar u}_{2,[0,1]},u^3) 
                \end{aligned}
            \end{equation}
            with $m_i=1$ for $i=1,\ldots,s$, $s=3$, $\varphi=(\varphi_1,\varphi_2,\varphi_3)=(\varphi^1,\varphi^2,\varphi^3)$ and $\kappa=(\kappa_1,\kappa_2,\kappa_3)=(2,3,5)$ together with the corresponding transformation of its forward-shifts
            \begin{equation*}
                \begin{aligned}
                    {\bar u}_{1,[1]} &= \varphi_{1,[3]}(x^1,x^4,x^5,x^6,x^7,x^8,x^9,x^{10},u^1_{[0,1]},u^2) \\
                    {\bar u}_{1,[2]} &= \varphi_{1,[4]}(x^1,x^4,x^5,x^6,x^7,x^8,x^9,x^{10},u^1_{[0,2]},u^2_{[0,1]},\\
                    & \hspace{1.4cm} u^3) \\
                    {\bar u}_{1,[3]} &= \varphi_{1,[5]}(x^1,x^4,x^5,x^6,x^7,x^8,x^9,x^{10},u^1_{[0,3]},u^2_{[0,2]},\\
                    & \hspace{1.4cm} u^3_{[0,1]}) \\[3ex]
                    {\bar u}_{2,[1]} &= \varphi_{2,[4]}(x^1,x^2,x^4,x^5,x^6,x^7,x^8,x^9,x^{10},{\bar u}_{1,[0,2]},\\
                    & \hspace{1.4cm} u^2_{[0,1]},u^3)\,.
                \end{aligned}
            \end{equation*}
            With this transformation, the forward-shifts $y_{[0,R_2]}$ of the forward-flat output take the form
            \begin{equation*}
              \begin{gathered}
                \begin{aligned}
                  y_{1,[0]} &= x^1 \\
                  y_{1,[1]} &= x^1 + x^4 \\
                  y_{1,[2]} &= {\bar u}_1 \\
                  y_{1,[3]} &= {\bar u}_{1,[1]} \\
                  y_{1,[4]} &= {\bar u}_{1,[2]} \\
                  y_{1,[5]} &= {\bar u}_{1,[3]}
                \end{aligned}
                \hspace{1.25cm}
                \begin{aligned}
                  y_{2,[0]} &= x^2 \\
                  y_{2,[1]} &= x^2 + x^5 \\
                  y_{2,[2]} &= x^2 + 2 x^5 + \tfrac{1}{x^6+x^9} \\
                  & \hspace{0.4cm} ({\bar u}_1-x^1-2 x^4-x^7) x^5 x^8 \\
                  y_{2,[3]} &= {\bar u}_2 \\
                  y_{2,[4]} &= {\bar u}_{2,[1]}
                \end{aligned}
                \\[2ex]
                \begin{aligned}
                  y_{3,[0]} &= x^3 \\
                  y_{3,[1]} &= x^3 + x^6 \\
                  y_{3,[2]} &= ({\bar u}_1 + 1 - x^1 - 2 x^4) x^8 + (x^5 + 1) x^3 + 2 x^6 \\
                  y_{3,[3]} &= \varphi_{3,[3]}(x^1,x^3,x^4,x^5,x^6,x^7,x^8,x^9,{\bar u}_{1,[0,1]}) \\
                  y_{3,[4]} &= \varphi_{3,[4]}(x,{\bar u}_{1,[0,2]},{\bar u}_2) \\
                  y_{3,[5]} &= {\bar u}_3
                \end{aligned}
              \end{gathered}
            \end{equation*}
            with $R_2=(r_{2,1},r_{2,2},r_{2,3})=(5,4,5)$.
            According to Theorem \ref{thm:prolongation}, the system \eqref{eq:sys_academic} together with the invertible, generalized input transformation \eqref{eq:sys_academic_trafo} and a $d_2=\#R_2-\#\kappa=4$-fold prolongation, i.e.
            \begin{equation*}
                \begin{aligned}
                    x^{+} & =f(x, \hat{\Phi}_u(x,{\bar u}_{[0,R_2-\kappa]})) \\
                    {\bar u}^{+}_1 &= {\bar u}_{1,[1]} \\
                    {\bar u}^{+}_{1,[1]} &= {\bar u}_{1,[2]} \\
                    {\bar u}^{+}_{1,[2]} &= {\bar u}_{1,[3]} \\
                    {\bar u}^{+}_2 &= {\bar u}_{2,[1]}
                \end{aligned}
            \end{equation*}
            with the $n=10+4=14$-dimensional state $(x^1,\ldots,x^{10},{\bar u}_1,{\bar u}_{1,[1]},{\bar u}_{1,[2]},{\bar u}_{2})$ and new input ${\bar u}_{1,[3]},{\bar u}_{2,[1]},{\bar u}_{3})$, is static feedback linearizable. This can be verified using e.g. the static feedback linearization test of \cite{Grizzle:1986}.
        \end{exmp}
        \vspace{0.1cm}
        
        \begin{exmp}
            The continuous-time model of a quadrotor has already been studied in the context of flatness-based control; see, e.g., \cite{MellingerKumar:2011,GreeffSchoellig:2022}. In this example, we demonstrate the exact linearization of a corresponding discrete-time quadrotor model, which constitutes an essential step towards the design of discrete-time trajectory tracking controllers. By applying the backward-flatness test proposed in \cite{SchrotshamerKolarSchoeberl:2026}, it can be shown that the discrete-time system
            \begin{equation}\label{eq:sys_quadcopter}
                \begin{aligned}
                    x^{1,+} &= x^1 + x^5 T_s + T_s^2 (g + u^1) \alpha_1, \\
                    x^{2,+} &= x^2 + x^6 T_s + T_s^2 (g + u^1) \alpha_2, \\
                    x^{3,+} &= x^3 + T_s x^7 + T_s^2 u^1, \\
                    x^{4,+} &= x^4 + T_s x^8 + T_s^2 u^2, \\
                    x^{5,+} &= x^5 + T_s (g + u^1) \alpha_1, \\
                    x^{6,+} &= x^6 + T_s (g + u^1) \alpha_2, \\
                    x^{7,+} &= x^7 + T_s u^1, \\
                    x^{8,+} &= x^8 + T_s u^2, \\
                    x^{9,+} &= x^9 + T_s x^{11} + T_s^2 u^3, \\
                    x^{10,+} &= x^{10} + T_s x^{12} + T_s^2 u^4, \\
                    x^{11,+} &= x^{11} + T_s u^3, \\
                    x^{12,+} &= x^{12} + T_s u^4
                \end{aligned}
            \end{equation}
            with 
            \begin{align*}
            \alpha_1=&\sin(T_s^2 u^2 + T_s x^8 + x^4) \tan(T_s^2 u^3 + T_s x^{11} + x^9) \\
            &\sec(T_s^2 u^4 + T_s x^{12} + x^{10}) + \cos(T_s^2 u^2 + T_s x^8 + x^4) \\
            &\tan(T_s^2 u^4 + T_s x^{12} + x^{10}) \\
            \alpha_2=&-\cos(T_s^2 u^2 + T_s x^8 + x^4) \tan(T_s^2 u^3 + T_s x^{11} + x^9) \\
            &\sec(T_s^2 u^4 + T_s x^{12} + x^{10}) + \sin(T_s^2 u^2 + T_s x^8 + x^4) \\
            &\tan(T_s^2 u^4 + T_s x^{12} + x^{10})
            \end{align*}
            and $n=12$ and $m=4$ is backward-flat with the flat output
            \begin{equation}\label{eq:quadcopter_y}
                \begin{aligned}
                y=&(x^1 + x^5 T_s + T_s^2 (g + u^1) \alpha_1,x^2 + x^6 T_s + \\
                & T_s^2 (g + u^1) \alpha_2,x^3 + T_s x^7 + T_s^2 u^1,x^4 + T_s x^8 + T_s^2 u^2)\,.
            \end{aligned}
            \end{equation}
            This model is obtained, for a sampling time $T_s>0$, by first applying a suitable state transformation to the continuous-time model and subsequently discretizing the transformed system using the implicit Euler method. For the functions $g(x)$ of \eqref{eq:sysEq_ext} defining the variables $\zeta$, we choose $g(x)=(x^7,x^8,x^{11},x^{12})$.

            In the first step, the variables $\zeta_{[-1]}$ first occur in the third backward-shift for each component of the backward-flat output \eqref{eq:quadcopter_y}, i.e. $L_1=(3,3,3,3)$. Since $m_1=\operatorname{rank}(\partial_{\zeta_{[-1]}}\varphi_{[-L_1]})=2$, two components of $\zeta_{[-1]}$ can be replaced in this step. For this purpose, we reorder the components of the flat output \eqref{eq:quadcopter_y} and $\zeta$ such that $\operatorname{rank}(\partial_{\zeta_{1,[-1]}}\varphi_{1,[-\lambda_1]})=2$ with $\varphi_1=(\varphi^3,\varphi^4)$, $\zeta_{1}=(\zeta^1,\zeta^2)$ and $\lambda_1=(3,3)$.
            Thus, with the transformation
            \begin{subequations} \label{eq:step_1_trafo}
                \begin{align}
                    \bar\zeta^1_{1,[-1]} &=\varphi_{1,[-3]}^1(\zeta_{1,[-1]}^1,x^3,x^7)=(-x^7 - \zeta^1_{1,[-1]})T_s + x^3  \label{subeq:step_1_trafo1} \\
                    \bar\zeta^2_{1,[-1]} &=\varphi_{1,[-3]}^2(\zeta_{1,[-1]}^2,x^4,x^8)=(-x^8 - \zeta^2_{1,[-1]})T_s + x^4\,, \label{subeq:step_1_trafo2}
                \end{align}
            \end{subequations}
            the backward-shifts of $y_{[-L_1,0]}$ take the form
            \begin{equation*}
              \begin{aligned}
                  y^1_{1,[-3]} &= (-x^7 - \zeta^1_{1,[-1]})T_s + x^3=\bar\zeta^1_{1,[-1]}\\
                  y^1_{1,[-2]} &= x^3 - T_s x^7\\
                  y^1_{1,[-1]} &= x^3\\
                  y^1_{1,[0]} &= x^3 + T_s x^7 + T_s^2 u^1 \\[2ex]
                  y^2_{1,[-3]} &= (-x^8 - \zeta^2_{1,[-1]})T_s + x^4=\bar\zeta^2_{1,[-1]}\\
                  y^2_{1,[-2]} &= x^4 - T_s x^8\\
                  y^2_{1,[-1]} &= x^4\\
                  y^2_{1,[0]} &= x^4 + T_s x^8 + T_s^2 u^2 \\[2ex]
                  y^1_{\text{rest}_1,[-3]} &= \varphi^1_{\text{rest}_1,[-3]}(\bar\zeta^1_{1,[-1]},x^1,x^3,x^4,x^5,x^7,x^9,x^{10})\\
                  y^1_{\text{rest}_1,[-2]} &= x^1 - T_s x^5\\
                  y^1_{\text{rest}_1,[-1]} &= x^1\\
                  y^1_{\text{rest}_1,[0]} &= x^1 + T_s x^5 + T_s^2 (g + u^1) \alpha_1\\[2ex]
                  y^2_{\text{rest}_1,[-3]} &= \varphi^2_{\text{rest}_1,[-3]}(\bar\zeta^1_{1,[-1]},x^2,x^3,x^4,x^6,x^7,x^9,x^{10})\\
                  y^2_{\text{rest}_1,[-2]} &= x^2 - T_s x^6\\
                  y^2_{\text{rest}_1,[-1]} &= x^2\\
                  y^2_{\text{rest}_1,[0]} &= x^2 + T_s x^6 + T_s^2 (g + u^1) \alpha_2
              \end{aligned}
            \end{equation*}
            with $y_1=(y^3, y^4)$, $y_{\text{rest}_1}=(y^1, y^2)$, $L_{\text{rest}_1}=(3, 3)$, $\varphi_{\text{rest}_1,[\alpha]}=(\varphi_{[\alpha]}^{1}, \varphi_{[\alpha]}^2) \circ \hat{\Phi}_1$ and $\hat{\Phi}_1$ denoting the inverse of the transformation \eqref{eq:step_1_trafo}.\\

            Next, the rest of the components of the flat output $y_{\text{rest}_1}$ are shifted further back, where the remaining variables $\zeta_{\text{rest}_1,[-1]}$ now appear in the fifth backward-shift, yielding $L_2=(5,5)$. Due to $m_2=\operatorname{rank}(\partial_{\zeta_{\text{rest}_{1},[-1]}} \varphi_{\text{rest}_{1},[-L_2]})=\operatorname{dim}(\zeta_{\text{rest}_{1},[-1]})=2$, all remaining components of $\zeta_{\text{rest}_1,[-1]}$ can be replaced. Hence, the procedure terminates at the step $s=2$ with $\varphi_2=(\varphi^1_{\text{rest}_{1}},\varphi^2_{\text{rest}_{1}})$, $y_2=(y^1_{\text{rest}_{1}},y^2_{\text{rest}_{1}})$, $\zeta_{2}=(\zeta^1_{\text{rest}_{1}},\zeta^2_{\text{rest}_{1}})$ and $\lambda_2=(5,5)$. Now the transformation
            \begin{equation}\label{eq:step_2_trafo}
            \begin{aligned}
                \bar\zeta^1_{2,[-1]} &=\varphi^1_{2,[-5]}(\zeta_{\text{rest}_{1},[-1]}, \bar\zeta^1_{1,[-3,-1]}, x^1,x^3,x^4,x^5,\\
                &\hspace{0.5cm}x^7,x^8,x^9,x^{10},x^{11},x^{12})\\
                \bar\zeta^2_{2,[-1]} &=\varphi^2_{2,[-5]}(\zeta_{\text{rest}_{1},[-1]}, \bar\zeta^1_{1,[-3,-1]}, x^2,x^3,x^4,x^6\\
                &\hspace{0.5cm},x^7,x^8,x^9,x^{10},x^{11},x^{12})
            \end{aligned}
        \end{equation}
        together with the transformation of the backward-shifts of \eqref{subeq:step_1_trafo1}, that is,
        \begin{equation}\label{eq:step_1_trafo_1}
        \begin{aligned}
            \bar\zeta^1_{1,[-2]} &=\varphi_{1,[-4]}^1(\zeta_{1,[-2,-1]}^1,x^3,x^7)\\
            \bar\zeta^1_{1,[-3]} &=\varphi_{1,[-5]}^1(\zeta_{1,[-3,-1]}^1,x^3,x^7)
        \end{aligned}
        \end{equation}
        results in
            \begin{equation*}
              \begin{gathered}
                  \begin{aligned}
                  y^1_{1,[-3]} &= \bar\zeta^1_{1,[-1]}\\
                  y^1_{1,[-2]} &= x^3 - T_s x^7\\
                  y^1_{1,[-1]} &= x^3\\
                  y^1_{1,[0]} &= x^3 + T_s x^7 + T_s^2 u^1
                \end{aligned}
                \qquad
                \begin{aligned}
                  y^2_{1,[-3]} &= \bar\zeta^2_{1,[-1]}\\
                  y^2_{1,[-2]} &= x^4 - T_s x^8\\
                  y^2_{1,[-1]} &= x^4\\
                  y^2_{1,[0]} &= x^4 + T_s x^8 + T_s^2 u^2
                \end{aligned}
              \end{gathered}
            \end{equation*}

            \begin{equation*}
              \begin{gathered}
                \begin{aligned}
                  y^1_{2,[-5]} &= \bar\zeta^1_{2,[-1]}\\
                  y^1_{2,[-4]} &= \varphi^1_{2,[-4]}(\bar\zeta^1_{1,[-2,-1]},x^1,x^3,x^4,x^5,x^7,x^8,x^9,\\
                  &\hspace{0.5cm}x^{10},x^{11},x^{12})\\
                  y^1_{2,[-3]} &= \varphi^1_{2,[-3]}(\bar\zeta^1_{1,[-1]},x^1,x^3,x^4,x^5,x^7,x^9,x^{10})\\
                  y^1_{2,[-2]} &= x^1 - T_s x^5\\
                  y^1_{2,[-1]} &= x^1\\
                  y^1_{2,[0]} &= x^1 + T_s x^5 + T_s^2 (g + u^1) \alpha_1\\[2ex]
                  y^2_{2,[-5]} &= \bar\zeta^2_{2,[-1]}\\
                  y^2_{2,[-4]} &= \varphi^2_{2,[-4]}(\bar\zeta^1_{1,[-2,-1]},x^2,x^3,x^4,x^6,x^7,x^8,x^9,\\
                  &\hspace{0.5cm}x^{10},x^{11},x^{12})\\
                  y^2_{2,[-3]} &= \varphi^2_{2,[-3]}(\bar\zeta^1_{1,[-1]},x^2,x^3,x^4,x^6,x^7,x^9,x^{10})\\
                  y^2_{2,[-2]} &= x^2 - T_s x^6\\
                  y^2_{2,[-1]} &= x^2\\
                  y^2_{2,[0]} &= x^2 + T_s x^6 + T_s^2 (g + u^1) \alpha_2\,,
              \end{aligned}
              \end{gathered}
            \end{equation*}
            which is precisely of the form \eqref{eq:y_backward_transformed_final}.
            It is now straightforward to verify that the parameterization of the system variables $x$ and $u$ requires the backward-shifts of $y$ up to the orders $R_1=(4,2,4,4)$. Hence, according to \eqref{eq:y_backward_transformed}, the backward-shifts $y_{[-R_1,0]}$ are given by
            \begin{equation*}
              \begin{gathered}
                  \begin{aligned}
                  y^1_{1,[-4]} &= \bar\zeta^1_{1,[-2]}\\
                  y^1_{1,[-3]} &= \bar\zeta^1_{1,[-1]}\\
                  y^1_{1,[-2]} &= x^3 - T_s x^7\\
                  y^1_{1,[-1]} &= x^3\\
                  y^1_{1,[0]} &= x^3 + T_s x^7 + T_s^2 u^1
                \end{aligned}
                \hspace{0.5cm}
                \begin{aligned}
                  y^2_{1,[-2]} &= x^4 - T_s x^8\\
                  y^2_{1,[-1]} &= x^4\\
                  y^2_{1,[0]} &= x^4 + T_s x^8 + T_s^2 u^2
                \end{aligned}
                \\[2ex]
              \begin{aligned}
                  y^1_{2,[-4]} &= \varphi^1_{2,[-4]}(\bar\zeta^1_{1,[-2,-1]},x^1,x^3,x^4,x^5,x^7,x^8,x^9,\\
                  &\hspace{0.5cm}x^{10},x^{11},x^{12})\\
                  y^1_{2,[-3]} &= \varphi^1_{2,[-3]}(\bar\zeta^1_{1,[-1]},x^1,x^3,x^4,x^5,x^7,x^9,x^{10})\\
                  y^1_{2,[-2]} &= x^1 - T_s x^5\\
                  y^1_{2,[-1]} &= x^1\\
                  y^1_{2,[0]} &= x^1 + T_s x^5 + T_s^2 (g + u^1) \alpha_1\\[2ex]
                  y^2_{2,[-4]} &= \varphi^2_{2,[-4]}(\bar\zeta^1_{1,[-2,-1]},x^2,x^3,x^4,x^6,x^7,x^8,x^9,\\
                  &\hspace{0.5cm}x^{10},x^{11},x^{12})\\
                  y^2_{2,[-3]} &= \varphi^2_{2,[-3]}(\bar\zeta^1_{1,[-1]},x^2,x^3,x^4,x^6,x^7,x^9,x^{10})\\
                  y^2_{2,[-2]} &= x^2 - T_s x^6\\
                  y^2_{2,[-1]} &= x^2\\
                  y^2_{2,[0]} &= x^2 + T_s x^6 + T_s^2 (g + u^1) \alpha_2\,.
              \end{aligned}
              \end{gathered}
            \end{equation*}
            According to Theorem \ref{thm:prelongation}, the system \eqref{eq:sys_quadcopter} together with the invertible, coordinate transformations \eqref{eq:step_1_trafo}, \eqref{eq:step_2_trafo}, \eqref{eq:step_1_trafo_1} and a $d_1=\#R_1-\#\lambda+m=2$-fold prelongation, i.e.
            \begin{equation*}
                \begin{aligned}
                    \bar\zeta^{1,+}_{1,[-2]} &= \bar\zeta^{1}_{1,[-1]} \\
                    \bar\zeta^{1,+}_{1,[-1]} &= \bar g^1_{1}(x^3,x^7) \\
                    x^{+} & =f(x, u)
                \end{aligned}
            \end{equation*}
            with the $n=12+2=14$-dimensional state $(\bar\zeta^{1}_{1,[-2]},\bar\zeta^{1}_{1,[-1]},x^1,\ldots,x^{12})$, original input $u$ and $\bar g^1_1=x^3-T_s x^7$, is static feedback linearizable. This can once again be verified using e.g. the static feedback linearization test of \cite{Grizzle:1986}.
        \end{exmp}



\bibliographystyle{IEEEtran}

\bibliography{IEEEabrv,bibliography_Johannes}             

                 
\end{document}